\newcommand{\swing}{\mathbin{\raisebox{2.0pt}
       {\rotatebox{160}{$\curvearrowleft$}}}}
\newcommand{\grrel}{\mathbin{\gr}}
\theoremstyle{plain}
 \newtheorem{theorem}{Theorem}
 \newtheorem{lemma}[theorem]{Lemma}
 \newtheorem{corollary}[theorem]{Corollary}
 \theoremstyle{definition}
 \newtheorem{definition}[theorem]{Definition}
\newcommand{\pr}[1]{\tup{(}#1\tup{)}}
\newcommand{\Col}[1]{\tup{col}(#1)}
\newcommand{\col}[1]{\tup{col}(#1)}
\newcommand{\traj}[1]{\tup{traj}{(#1})}
\newcommand{\ZL}[1]{\ell(#1)}
\newcommand{\normup}{\includegraphics[scale=.2]{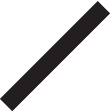}-}
\newcommand{\normdn}{\includegraphics[scale=.2]{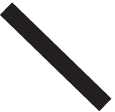}-}
\newcommand{\steep}{\includegraphics[scale=.2]{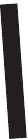}-}
\newcommand{\normaldown}{\includegraphics[scale=.2]{normaldn}-}
\newcommand{\normn}{\includegraphics[scale=.2]{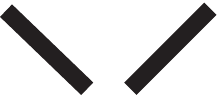}}
\newcommand{\normupn}{\includegraphics[scale=.2]{normalup}}
\newcommand{\normdnn}{\includegraphics[scale=.2]{normaldn}}
\newcommand{\steepn}{\includegraphics[scale=.2]{steep}}
\newcommand{\normalupn}{\includegraphics[scale=.2]{normalup}}
\begin{document}
\title[The Swing Lemma and Cz\'edli diagrams]
{Using the Swing Lemma and Cz\'edli diagrams 
for congruences of planar semimodular lattices}
\author[G.\ Gr\"atzer]{George Gr\"atzer}
\email{gratzer@me.com}
\urladdr{http://server.maths.umanitoba.ca/homepages/gratzer/}
\address{University of Manitoba}
\date{April 12, Version 0.4}
\begin{abstract} 
A planar semimodular lattice $K$ is \emph{slim} 
if $\SM{3}$ is not a sublattice of~$K$.
In a recent paper, G. Cz\'edli found four new properties  
of congruence lattices of slim, planar, semimodular lattices,
including the \emph{No Child Property}: 
\emph{Let~$P$ be the ordered set of join-irreducible congruences of $K$.
Let $x,y,z \in  P$ and let $z$ be a~maximal element of $P$.
If  $x \neq y$, $x, y \prec z$ in $P$, 
then there is no element $u$ of $P$ such that $u \prec x, y$ in $P$.}

We are applying my Swing Lemma, 2015,
and a type of standardized diagrams of Cz\'edli's, to verify Cz\'edli's four properties.
\end{abstract}

\subjclass[2000]{06C10}

\keywords{Rectangular lattice, patch lattice, slim planar semimodular lattice, 
congruence lattice}

\maketitle    

\section{Introduction}\label{S:Introduction}

Let $K$ be a planar semimodular lattice. 
We call the lattice $K$ \emph{slim} if $\SM{3}$ is not a~sublattice of~$K$.
In the paper \cite{gG14a}, I found a property of congruences of  slim, planar, semimodular lattices.
In the same paper (see also Problem 24.1 in G. Gr\"atzer~\cite{CFL2}),
I~proposed the following:

\vspace{6pt}

\tbf{Problem 1.} Characterize the congruence lattices of slim  planar semimodular lattices.

\vspace{6pt}

G. Cz\'edli ~\cite[Corollaries 3.4, 3.5, Theorem 4.3]{gCa} found four new properties  
of congruence lattices of slim, planar, semimodular lattices.

\begin{named}{Cz\'edli's Theorem~\cite{gCa}}
Let $K$ be a slim, planar, semimodular  lattice 
with at least three elements and let~$\E P$ be
the ordered set of join-irreducible congruences of $K$.

\begin{enumeratei}
\item \emph{Partition Property:} 
The set of maximal elements of $\E P$ can be represented as the disjoint union 
of two nonempty subsets such that no two distinct elements 
in the same subset have a common lower cover.\label{E:LC} 
\item \emph{Maximal Cover Property:}  
If $x \in \E P$ is covered by a maximal element $y$ of $\E P$, 
then $y$ is not the only cover.
\item \emph{Four-Crown Two-pendant Property:}
There is no cover-preserving embedding of the ordered set $\E R$ in Figure~\ref{F:notation}
into $\E P$ satisfying the property\tup{:} any maximal element of~$\E R$
is a maximal element of $\E P$.
\item \emph{No Child Property:} 
Let $x \neq y \in \E P$ and let $z$ be a maximal element of $\E P$. 
Let us assume that both $x$ and $y$ are covered by $z$ in $\E P$. 
Then there is no element $u \in \E P$ such that $u$ is covered by $x$ and $y$.
\end{enumeratei}
\end{named}

In this paper, we will prove this theorem using the Swing Lemma 
and Cz\'edli diagrams, see Sections~\ref{S:Swing} and \ref{S:diagrams}.
By G. Gr\"atzer and E. Knapp \cite{GKn09}, 
every slim, planar, semimodular lattice $K$ has a congruence-preserving
extension $\ol K$ to a slim rectangular lattice. 
Any of the properties (i)--(iv) holds for $K$ if{}f it holds for $\ol K$. 
Therefore, in the rest of this paper, we can assume that $K$ is  a slim rectangular lattice,
simplifying the discussion.

\begin{figure}[t!]
\centerline
{\includegraphics[scale=1]{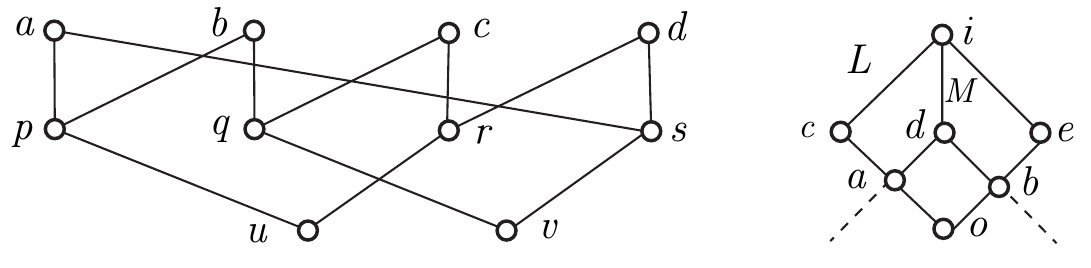}}
\caption{The Four-crown Two-pendant ordered set $\E R$ with notation; 
the covering $\SN 7$ sublattice with notation}\label{F:notation}
\end{figure}

\subsection*{Outline} Section~\ref{S:Background} provides the results we need: 
the Swing Lemma, Cz\'edli's  $\E C_1$-diagrams (we call them Cz\'edli diagrams),
and forks.
Section~\ref{S:partition} proves the Partition Property, 
Section~\ref{S:Maximal} does the Maximal Cover Property,
while Section~\ref{S:Child} verifies the No Child Property.
Finally,  The Four-Crown Two-pendant Property is proved in Section~\ref{S:Crown}.

\section{Background}\label{S:Background}

Most basic concepts and notation not defined in this paper 
are available in Part~I of the book \cite{CFL2}, see 

\verb+https://www.researchgate.net/publication/299594715+\\
\indent \verb+arXiv:2104.06539+

\noindent It is available to the reader. 
We will reference it, for instance, as [CFL2, page 52].

\subsection{Swing Lemma}\label{S:Swing}

An  \emph{SPS lattice} $K$ is a slim, planar, and semimodular lattice, see [CFL2, Chapter 4]. 
For an edge (prime interval) $E$ of $K$, let $E = [0_E, 1_E]$ 
and define  $\Col{E}$,  the \emph{color of}~$E$, as $\con E$,
the (join-irreducible) congruence generated by collapsing $E$ (see [CFL2, Section 3.2]).
We write $\E P$ for $\Ji {\Con K}$, the ordered set of join-irreducible congruences of $K$.

As in my paper~\cite{gG15},  for the edges $U, V$ 
of an SPS lattice $K$, we define a binary relation:
$U$~\emph{swings} to $V$, in formula, $U \swing V$, if $1_U = 1_V$, 
the element $1_U = 1_V$ covers at least three elements,
and $0_V$ is neither the left-most nor the right-most element covered by $1_U = 1_V$; 
if also $0_U$ is such, then the swing is \emph{interior}.

\begin{named}{Swing Lemma [G. Gr\"atzer~\cite{gG15}]}
Let $K$ be an SPS lattice and let $U$ and $V$ be edges in $K$. 
Then  $\Col V \leq\Col U$ if{}f there exists an edge $R$ 
such that $U$ is up-perspective to $R$ 
and there exists a sequence of edges and a~sequence of binary relations 
\begin{equation}\label{E:sequence}
   R = R_0 \grrel_1 R_1 \grrel_2 \dots \grrel_n R_n = V,
\end{equation}
where each relation $\grrel_i$ is $\perspdn$ \pr{down-perspective} or $\swing$ \pr{swing}.

In~addition, the sequence \eqref{E:sequence} also satisfies 
\begin{equation}\label{E:geq}
   1_{R_0} \geq 1_{R_1} \geq \dots \geq 1_{R_n}.
\end{equation}
\end{named}

The following statements are immediate consequences of the Swing Lemma,
see my papers~\cite{gG15} and \cite{gG14e}.

\begin{corollary}\label{C:ucovv} We use the assumptions of the Swing Lemma.
\begin{enumeratei}
\item If $\Col U = \Col V$, then either $U$ and $V$ are perspective
or there exist edges $S$ and $T$ 
so that $U \perspup S \swing T \perspdn V$, where the swing is interior.
\item If $v \prec u$ in $\E P$, then there exist edges $U, L$  of color $u$
and $R, V$ of color~$v$ such that~$U \perspup L \swing R \perspdn V$,
as in see the first diagram of Figure~\ref{F:Corollary2}.
\end{enumeratei}
\end{corollary}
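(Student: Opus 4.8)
The plan is to derive both statements directly from the Swing Lemma as stated, treating them as the two boundary cases of the equivalence. For part (i), I would start from $\Col U = \Col V$, which means both $\Col V \le \Col U$ and $\Col U \le \Col V$. Applying the Swing Lemma to $\Col V \le \Col U$ gives an edge $R$ with $U \perspup R$ and a sequence \eqref{E:sequence} of down-perspectivities and swings from $R$ to $V$, monotone in the top elements by \eqref{E:geq}. Symmetrically, $\Col U \le \Col V$ gives a reverse sequence from $V$ down to (an up-perspective image of) $U$, again monotone. The key observation is that the monotonicity condition \eqref{E:geq} forces all the $1_{R_i}$ to coincide once we concatenate the two sequences: going down and then back up to the same color while $1_{R_i}$ can only weakly decrease means the top elements are constant along each leg, so each $\grrel_i$ is either a perspectivity between edges sharing a top, or a swing. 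I would then argue that a nontrivial color-preserving chain of this restricted form collapses to the stated normal form $U \perspup S \swing T \perspdn V$, with the swing forced to be interior because both the source and target of the swing lie strictly between the extreme lower covers (otherwise one of the two directions of the inequality would fail).

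For part (ii), I would use the cover relation $v \prec u$. Since $v < u$, the Swing Lemma applied to $\Col V \le \Col U$ (picking any representative edges $U$ of color $u$ and $V$ of color $v$) produces the sequence \eqref{E:sequence}. Because $v$ is covered by $u$ and the colors only weakly decrease along the chain by the monotonicity in \eqref{E:geq}, I would show that exactly one step in the chain strictly drops the color from $u$ to $v$, and all other steps preserve color. A color-preserving step is either a perspectivity (absorbable into the perspective legs) or a swing; invoking part (i) to normalize the color-preserving portions, the single color-dropping step must be realized by the pattern $L \swing R$ followed by a down-perspectivity, yielding edges $U \perspup L \swing R \perspdn V$ with $L$ of color $u$ and $R$ of color $v$, matching the first diagram of Figure~\ref{F:Corollary2}.

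The main obstacle I anticipate is the bookkeeping that turns an arbitrary Swing-Lemma chain into the short normal form. The Swing Lemma guarantees existence of some sequence, but controlling its length and shape requires showing that consecutive perspectivities compose (up-perspective followed by up-perspective is up-perspective, and similarly for down), that a swing sandwiched between perspectivities can be shifted to the canonical position, and that the monotonicity \eqref{E:geq} genuinely pins down where the single color change occurs. Establishing that the relevant swing is \emph{interior} in part (i) is the most delicate point, since it relies on reading off the geometry of the lower covers of the shared top element rather than just the abstract perspectivity relations; I would handle this by a short argument that a non-interior swing would make the color comparison strict in one direction, contradicting $\Col U = \Col V$. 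Since the statement says these are immediate consequences and cites \cite{gG15,gG14e}, I expect the intended proof to lean on already-developed machinery there, so the write-up would be correspondingly brief, citing those papers for the structural facts about composing perspectivities and swings.
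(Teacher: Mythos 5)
Your reduction of part (i) breaks down at the step you call the key observation. Concatenating the two Swing Lemma sequences does not produce a chain to which \eqref{E:geq} applies: the junctions $U \perspup R_0$ and $V \perspup R_0'$ are up-perspectivities, which \emph{raise} the tops, so along the concatenation the tops go down, then up, then down, and nothing forces constancy. Indeed constancy is false even along a single leg: if $U$ and $V$ are opposite edges of a $4$-cell with $U \perspdn V$, then $\Col U = \Col V$ while $1_U > 1_V$, so the tops strictly decrease along the (one-step) sequence \eqref{E:sequence}. Consequently your conclusion that every step ``is either a perspectivity between edges sharing a top, or a swing'' is unjustified, and the claimed collapse to $U \perspup S \swing T \perspdn V$ does not follow. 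What part (i) actually rests on are two facts your bookkeeping cannot supply: (a) a swing that is not interior strictly decreases the color --- you assert this parenthetically, but it is the crux and needs its own argument, not being a formal consequence of the statement of the Swing Lemma; and (b) the source and target of an interior swing have interior bottoms, hence are middle edges of covering $\SN 7$'s, hence are the top edges of their trajectories. Fact (b) is what pins down the normal form: a color-preserving chain must consist of an up-perspectivity inside the trajectory of $U$ up to its top edge, a block of interior swings at one common top (which compose), and a down-perspectivity; and in the swing-free case one must still argue that either $U$ and $V$ are perspective or the common trajectory's top edge $T$ admits the trivial interior swing $T \swing T$, i.e., that $1_T$ covers at least three elements. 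None of this appears in your write-up.

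Part (ii) has two further defects. First, \eqref{E:geq} is monotonicity of the lattice elements $1_{R_i}$, not of colors; that the colors weakly decrease (in fact satisfy $v \le \Col{R_i} \le u$, hence lie in $\{u,v\}$ because $v \prec u$) follows from applying the Swing Lemma to the tails and heads of the sequence, not from \eqref{E:geq}. Second, you fix representative edges $U$ and $V$ at the outset and try to realize the normal form with those very edges; that stronger statement can fail, since the unique color-dropping swing may occur in a trajectory not reachable from your chosen $U$ by a single up-perspectivity. It is also unnecessary: in (ii) all four edges are existentially quantified, so once you locate the color-dropping step --- necessarily a swing $L \swing R$, as down-perspectivities preserve color --- you may simply take $U = L$ and $V = R$, using that every edge is trivially up- and down-perspective to itself. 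Finally, note that the paper offers no proof of this corollary at all, only the citation of \cite{gG15} and \cite{gG14e}; your instinct to lean on those papers is reasonable, but the self-contained derivation you sketch is not correct as it stands.
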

Note that the covering relation in $\E P$ can always be represented in a covering $\SN 7$.

\begin{figure}[htb]
\centerline{\includegraphics{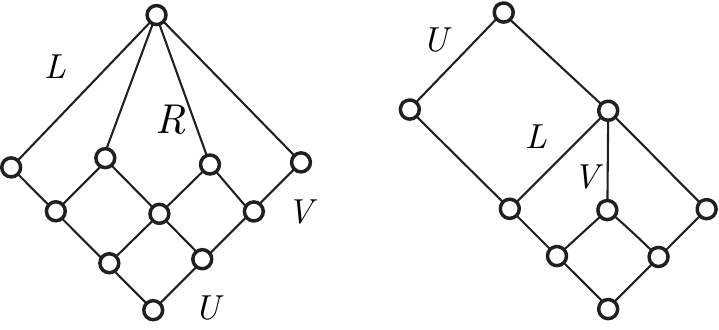}}
\caption{Illustrating Corollaries 2 and 3}
\label{F:Corollary2}
\end{figure}

\begin{corollary}\label{C:max}
Let the edge $U$ be on the upper edge of $K$. 
Then $\Col U$ is a maximal element of $\E P$.
Conversely, if $u$ is a maximal element of $\E P$,
then there is an edge $U$ on the upper edge of $K$ so that $\Col U = u$.
\end{corollary}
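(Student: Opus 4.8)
The plan is to prove the two assertions separately, in both cases reducing to the Swing Lemma and Corollary~\ref{C:ucovv}, together with the elementary geometry of a slim rectangular lattice: its upper boundary is the union of two maximal chains meeting at $1$, and for an edge $U$ on this boundary the element $0_U$ is either the left-most or the right-most lower cover of~$1_U$. I would record this observation first, since both directions hinge on it: because a swing $W \swing U$ requires $0_U$ to be neither the left-most nor the right-most among the lower covers of $1_U = 1_W$, \emph{no edge can swing to a boundary edge}.

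For the first assertion I would argue by contraposition. Assume $\Col U$ is not maximal; as $\E P$ is finite, there is a $w$ with $\Col U \prec w$. Applying Corollary~\ref{C:ucovv}(ii) to this covering produces edges $U', L$ of color $w$ and $R, V$ of color $\Col U$ with
\[
 U' \perspup L \swing R \perspdn V .
\]
The middle relation is a swing ending at $R$, so $0_R$ is an interior lower cover of $1_R$; in particular $R$ does not have the extreme-cover property of a boundary edge. Since $\Col R = \Col U$, Corollary~\ref{C:ucovv}(i) gives that $U$ and $R$ are perspective, or else are joined by an interior swing $U \perspup S \swing T \perspdn R$. I would then transport the boundary position of $U$ along these perspectivities to show that $R$, too, must have $0_R$ extreme, contradicting the previous sentence. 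This transport is the delicate point: a perspectivity can in principle move an edge off the boundary, so the argument must use that $U$ lies on the \emph{upper} boundary (not merely that it is some boundary edge) to guarantee that an up-perspective image $S$ still sits at the top with an extreme lower cover, and that a swing emanating from such an $S$ is impossible.

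For the converse, let $u$ be maximal and choose, among all edges of color $u$, an edge $U$ whose top $1_U$ is maximal in $K$. Since an up-perspectivity preserves color and strictly raises the top, $U$ admits no proper up-transpose, so $U$ is a local peak. If $U$ lay in the interior, then $1_U$ would cover at least three elements and, by the geometry of forks, $U$ would be the target of a swing $O \swing U$ from a more outward edge $O$; then $\Col U \le \Col O$, and maximality of $u$ forces $\Col O = u$. As $1_O = 1_U$ is again a maximal top and $O$ lies strictly nearer the boundary, repeating the step moves outward finitely often and must terminate at a boundary edge of color $u$, which is the edge sought. The monotonicity~\eqref{E:geq} guarantees that no step of this descent returns to a higher top, so termination is genuine.

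The main obstacle is the geometric bookkeeping common to both directions: establishing the extreme-lower-cover property on the upper boundary, identifying interior peaks with multifork points that are swing targets, and---most delicately---controlling the boundary and extreme-cover status of an edge under the up-perspectivities supplied by Corollary~\ref{C:ucovv}. Once these facts are secured, the Swing-Lemma manipulations reduce to routine checks.
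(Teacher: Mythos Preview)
The paper does not prove this corollary; it simply lists it among the ``immediate consequences of the Swing Lemma'' and cites \cite{gG15} and \cite{gG14e}. So there is no detailed argument to compare against, only an implicit one.

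Your converse direction is essentially correct and matches the intended idea: an edge of color $u$ with maximal top has no proper up-perspective, hence is the top of its trajectory, hence (by the remark in Section~\ref{S:Trajectories}) lies on the upper boundary or is steep; in the steep case one swing from an outer edge of the covering $\SN 7$ already produces a \emph{normal} edge of the same color and the same top, which must therefore sit on the upper boundary. One step suffices; the iteration you describe is not needed.

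Your first direction, however, takes an unnecessary detour through Corollary~\ref{C:ucovv}(ii) and then (i), and the ``delicate point'' you flag is a genuine gap in the outline: when Corollary~\ref{C:ucovv}(i) returns only that $R$ and $U$ are perspective with $R \perspup U$ strictly, nothing you have written forces $0_R$ to be extreme, so the contradiction does not close. The direct argument avoids this entirely. Apply the Swing Lemma itself with target $U$: if $\Col U \leq \Col W$, the lemma produces $W \perspup R_0$ and a sequence \eqref{E:sequence} ending at $U$. Since $U$ lies on the upper boundary, $0_U$ is meet-irreducible (the paper uses exactly this fact in the proof of Lemma~\ref{L:disjoint}), so no edge is \emph{strictly} down-perspective onto $U$; and $0_U$ is an extreme lower cover of $1_U$, so no edge swings to $U$. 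Hence the sequence is trivial, $W \perspup U$, and $\Col W = \Col U$. This is presumably the ``immediate'' argument the paper intends, and it sidesteps your transport problem altogether.
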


\begin{corollary}\label{C:ucovv1}
Let $v \prec u$ in  $\E P$ and $u$ be a maximal element of $\E P$.
If $U$ is an edge on the upper edge of $K$ with $\Col U = u$,
then there exist an edge $L$ 
such that $U \perspdn L \swing V$,
as in the second diagram of Figure~\ref{F:Corollary2}.
\end{corollary}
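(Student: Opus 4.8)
The plan is to apply the Swing Lemma directly to the relation $v \leq u$ and then exploit the two special hypotheses — that $U$ lies on the upper edge and that $v \prec u$ is a covering — to collapse the resulting chain into the short form $U \perspdn L \swing V$.

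First I would fix any edge $V$ of color $v$ (one exists since $v \in \E P$) and apply the Swing Lemma to $\Col V \leq \Col U$. This produces an edge $R$ with $U \perspup R$ together with a chain $R = R_0 \grrel_1 \dots \grrel_n R_n = V$ of down-perspectives and swings satisfying \eqref{E:geq}. The first point where the hypothesis bites is the up-perspectivity $U \perspup R$: since $U$ lies on the upper edge of the rectangular lattice $K$, it admits no proper up-perspective, so $U \perspup R$ forces $R = U$ and the chain begins at $U$ itself.

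Next I would read off the colors along the chain. A down-perspective preserves color, while a swing can only lower it, so $\Col{R_0} \geq \Col{R_1} \geq \dots \geq \Col{R_n}$; in particular every $\Col{R_i}$ lies in the interval $\{w : v \leq w \leq u\}$ of $\E P$, which equals $\{v,u\}$ because $v \prec u$. Hence there is a unique index $k$ with $\Col{R_k}=u$ and $\Col{R_{k+1}} = v$, and since the color drops only at a swing, the step $R_k \grrel_{k+1} R_{k+1}$ is a swing $R_k \swing R_{k+1}$. Setting $V := R_{k+1}$ discards the color-$v$ tail of the chain, and writing $L := R_k$ we already have $L \swing V$ with $L$ of color $u$.

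It remains to turn the color-$u$ prefix $U = R_0, R_1, \dots, R_k = L$ — a sequence of down-perspectives and possible color-preserving swings — into a single down-perspective $U \perspdn L$. This is the step I expect to be the main obstacle. The idea is that, because $u$ is maximal, by Corollary~\ref{C:max} the trajectory of color $u$ has its top edge on the upper boundary, so $U$ is perspective to that top edge and the prefix merely descends this one trajectory from its top to the swing edge $L$; the unimodal shape of trajectories in a slim rectangular lattice, recorded in [CFL2], should let me replace this descent by a single down-perspective (or, in the degenerate case $k=0$, by $U = L$ with the down-perspective vacuous). Assembling the pieces yields $U \perspdn L \swing V$, the configuration of the second diagram of Figure~\ref{F:Corollary2}. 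An alternative route, avoiding the trajectory bookkeeping, is to start from the generic configuration of Corollary~\ref{C:ucovv}(ii) and use Corollary~\ref{C:ucovv}(i) to compare our fixed $U$ with its color-$u$ swing edge; the upper-edge hypothesis again annihilates the up-perspective, and one must then check that the surviving perspectivity is a single down-perspective — the same obstacle in a different guise.
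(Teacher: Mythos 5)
Your reduction of the Swing Lemma chain is fine up to the step you yourself flag, but the resolution you propose for that step does not work, and this is a genuine gap. After you isolate the unique color-dropping swing $R_k \swing R_{k+1}$, the prefix $U = R_0, \dots, R_k = L$ is a sequence of down-perspectivities \emph{and possibly swings that preserve the color} $u$. Nothing in the Swing Lemma lets you choose a chain without such swings, and Corollary~\ref{C:ucovv}(i) does not forbid them: it says equal colors can be \emph{witnessed} by a perspectivity or an interior swing, not that every swing occurring between edges of equal color is interior. A swing keeps the top element but changes the bottom, and in a Cz\'edli diagram its target is an interior, hence steep, edge (the middle edge of a covering $\SN 7$); the source and target of a swing never lie in the same trajectory (compare Lemma~\ref{L:disj}). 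So once the prefix contains a swing, it does \emph{not} ``descend this one trajectory,'' and the unimodality of trajectories cannot collapse it into a single down-perspectivity. (If the prefix consisted of down-perspectivities only, it would collapse, since down-perspectivity of prime intervals is transitive by a direct computation; the swings are the only, but real, obstruction.)

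The fix is precisely the ``alternative route'' you mention and then wrongly dismiss: it does \emph{not} face the same obstacle, because Corollary~\ref{C:ucovv}(i) hands you a \emph{single} perspectivity, not a chain to be compressed. Concretely: by Corollary~\ref{C:ucovv}(ii), since $v \prec u$ there are edges $L'$ of color $u$ and $V'$ of color $v$ with $L' \swing V'$. Apply Corollary~\ref{C:ucovv}(i) to $U$ and $L'$, both of color $u$: either they are perspective, or $U \perspup S \swing T \perspdn L'$ with the swing interior. If $U$ lies on, say, the upper-left boundary, then $0_U$ is the leftmost element covered by $1_U$, so no \emph{interior} swing can start at $U$; and $U$ admits no proper up-perspectivity, since an upper-boundary edge cannot be a lower edge of any $4$-cell (such a cell would lie outside $K$), i.e., $U$ is the top edge of its trajectory. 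This kills the second alternative outright and, in the first, forces $U \perspdn L'$ (or $U = L'$), giving $U \perspdn L' \swing V'$ as required; this is the one-line derivation the paper intends when it calls the corollary an immediate consequence. Note finally that both your routes lean on the no-proper-up-perspectivity claim, which you assert without proof; it is true, but it needs the $4$-cell (trajectory-top) argument just indicated --- for edges on the \emph{lower} boundary the analogous claim is false.
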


\subsection{Cz\'edli diagrams}\label{S:diagrams}

In the diagram of a planar lattice $K$,
a \emph{normal edge} (\emph{line}) has a slope of $45\degree$ or $135\degree$.
If it is the first, we call it a \emph{normal-up edge} (\emph{line}), 
otherwise, a  \emph{normal-down edge} (\emph{line}).
Any edge of slope strictly between $45\degree$ and $135\degree$ is \emph{steep}.
We use the symbols \normn, \normupn, \normdnn, and \steepn, 
for normal, normal-up, normal-down, and steep, respectively.

\begin{definition}[G. Cz\'edli \cite{gC1}]\label{D:well}
A diagram of an SPS lattice $L$ is a
\emph{Cz\'edli-diagram} if the middle edge of any covering $\SN 7$ is steep
and all other edges are normal.
\end{definition}

\begin{theorem}[G. Cz\'edli \cite{gC1}]\label{T:well}
Every slim, planar, semimodular lattice $K$ has a Cz\'edli diagram.
\end{theorem}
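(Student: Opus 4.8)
The plan is to prove the theorem by induction on the fork construction, after reducing to the rectangular case. By the Gr\"atzer--Knapp construction~\cite{GKn09} invoked above, $K$ is a cover-preserving sublattice of a slim \emph{rectangular} lattice $\bar K$, obtained from $\bar K$ by deleting corner elements. Deleting an element only deletes edges and never changes the slope of a surviving one, and every covering $\SN 7$ of $K$ remains a covering $\SN 7$ of $\bar K$ with the same middle edge, since covers are preserved. Hence a Cz\'edli diagram of $\bar K$ restricts to one of $K$, provided one checks the single point that no edge left steep by $\bar K$ becomes an ordinary, non-middle edge of $K$; granting this, it suffices to build a Cz\'edli diagram when $K$ is slim rectangular.

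For rectangular lattices I would use the structure theorem (see \cite{CFL2}) that every slim rectangular lattice is obtained from a grid $G = C \times D$, a direct product of two finite chains, by a finite sequence of fork insertions, and argue by induction on the number of forks. In the base case, draw $G$ in the standard diamond fashion, so that every edge has slope $45\degree$ or $135\degree$; since $G$ is distributive it has no $\SN 7$ sublattice, so the Cz\'edli condition is vacuous.

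For the inductive step, let $K'$ arise from $K$ by inserting a fork into a $4$-cell $S = \{o; x, y; t\}$, where $o = x \wedge y$ and $t = x \vee y$, and suppose $K$ already carries a Cz\'edli diagram. Locally the fork replaces $S$ by a copy of $\SN 7$: a new centre coatom $c$ appears below $t$, flanked by $x$ on the left and $y$ on the right, with $t$ now covering the three elements $x$, $c$, $y$. This is exactly the shape we want: placing $c$ directly beneath $t$ makes the edge $[c,t]$ steep while $[x,t]$, $[y,t]$, and the new lower edges can be kept normal, and $[c,t]$ is precisely the middle edge of this covering $\SN 7$.

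The hard part, and the main obstacle, is that fork insertion is not genuinely local: inserting a fork into $S$ forces a modification that propagates along the trajectory through $S$ and stacks new cells below and beside $S$. A naive local patch would collide with the surrounding elements and could ruin slopes already fixed in the diagram of $K$, including the steep middle edges of earlier copies of $\SN 7$. To get around this I would not patch locally but re-derive the whole diagram from a coordinate system read off the two boundary chains of the rectangular lattice, so that the slope of every edge is determined uniformly by the coordinates. The theorem then reduces to a combinatorial lemma rather than an \emph{ad hoc} geometric construction: after each fork the coordinates can be chosen consistently so that an edge is steep exactly when it is the middle edge of a covering $\SN 7$ and is normal otherwise, with no two edges placed in conflict. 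Verifying this simultaneous satisfiability --- that the propagation triggered by each fork disturbs only edges that are genuinely required to change type, and introduces exactly one new steep edge, the new $[c,t]$ --- is where I expect the real work to lie.
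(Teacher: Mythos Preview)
The paper does not actually contain a proof of this theorem. It is merely stated with attribution to Cz\'edli~\cite{gC1}, and the text immediately following notes that an alternative proof appears in Gr\"atzer~\cite{gG21b}. So there is no ``paper's own proof'' to compare your attempt against; the result is imported wholesale as background.

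As to your proposal itself: it is a reasonable strategic outline, and in fact the reduction to the rectangular case followed by induction on fork insertions is the skeleton of the alternative proof in~\cite{gG21b}. But as written it is not a proof, and you say so yourself: the sentence ``Verifying this simultaneous satisfiability \ldots\ is where I expect the real work to lie'' is an explicit admission that the crucial step is missing. The difficulty you correctly identify---that a single fork insertion propagates down two arms of a trajectory and forces a cascade of new elements and new $4$-cells, so one cannot simply nudge a single vertex---is exactly the content of the theorem, and your appeal to ``a coordinate system read off the two boundary chains'' is too vague to carry it. You would need to specify those coordinates, show concretely how they update under a fork insertion, and verify that every newly created edge except the new middle edge $[c,t]$ is normal while every pre-existing edge keeps its type. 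There is also the loose end you flag in the first paragraph: when passing from $\bar K$ back to $K$ you must check that deleting corners cannot strip two outer coatoms from a covering $\SN 7$ and leave a formerly steep middle edge no longer the middle of any $\SN 7$; this is true (corners are doubly irreducible boundary elements and cannot be coatoms of an interior $\SN 7$), but it needs a line of argument, not a ``granting this''.
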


See the illustrations in this paper for examples of Cz\'edli diagrams.
G. Cz\'edli \cite{gC1} calls these $\E C_1$-diagrams. 
He also defines  $\E C_0$- and  $\E C_2$-diagrams. 
For an alternative proof for the existence of Cz\'edli diagrams, see G.~Gr\"atzer~\cite{gG21b}.

\emph{In this paper, $K$ denotes a slim rectangular lattice with a fixed Cz\'edli diagram}.

Let $C$ and $D$ be maximal chains in an interval $[a,b]$ of $K$ 
such that $C \ii D = \set{a,b}$.
If there is no element of~$K$ between $C$ and $D$, 
then we call $C \uu D$ a~\emph{cell}. 
A~four-element cell is a \text{\emph{$4$-cell}}. 
Opposite edges of a $4$-cell are called \emph{adjacent}.
Planar semimodular lattices are $4$-cell lattices, 
that is, all of its cells are $4$-cells,
see G.~Gr\"atzer and E. Knapp \cite[Lemmas 4, 5]{GKn07} 
and  [CFL2,~Section 4.1] for more detail.

The following statement illustrates the use of Cz\'edli diagrams.

\begin{lemma}\label{L:application}
Let $K$ be a slim rectangular lattice $K$ with a fixed Cz\'edli diagram
and let $X$ be a \normup edge of $K$. 
Then $X$ is up-perspective either to an edge in the upper-left boundary of $K$
or to a \steep edge.
\end{lemma}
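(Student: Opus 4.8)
The plan is to start at $X$ and repeatedly transpose up along \normup edges, keeping track of the top endpoint, until the process is forced to stop; I will then show that the only two ways it can stop are exactly the two alternatives in the conclusion. Set $Y_0 = X$. Given a \normup edge $Y_i = [0_{Y_i},1_{Y_i}]$, I look at the $4$-cell lying on the upper-left side of $Y_i$, if one exists (there is at most one, by planarity). In such a cell $Y_i$ is the lower-right edge, so I let $Y_{i+1}$ be the opposite (upper-left) edge; then $Y_i$ and $Y_{i+1}$ are perspective and $Y_{i+1}$ lies above $Y_i$, giving $Y_i \perspup Y_{i+1}$. Writing the cell as having bottom, left vertex, right vertex $1_{Y_i}$, and top $1_{Y_{i+1}}$, I record that $1_{Y_i} \prec 1_{Y_{i+1}}$, so the top endpoints increase strictly along the chain $Y_0 \perspup Y_1 \perspup \cdots$. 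Since $K$ is finite, this chain terminates.

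Next I classify the stopping configurations. Two cases can occur. If there is \emph{no} $4$-cell on the upper-left side of $Y_i$, I claim $Y_i$ lies on the upper-left boundary of $K$: a \normup edge whose upper-left side is empty is a boundary edge whose interior lies to its lower-right, and in a slim rectangular lattice the only boundary carrying such edges is the upper-left one. Indeed, the lower-left and upper-right boundaries consist of \normdn edges, while a lower-right boundary edge, though \normup, still has a $4$-cell to its upper-left and so is not terminal. If instead a $4$-cell $C$ \emph{does} sit on the upper-left side of $Y_i$, then its upper-left edge $Y_{i+1}$ runs up and to the right, from the left vertex of $C$ to the top of $C$; by the slope conventions of a Cz\'edli diagram this edge is either \normup or \steep (it cannot be \normdn, as it ascends to the right). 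If it is \steep, I stop, having reached a steep edge; if it is \normup, I continue the iteration, the strict increase of top endpoints guaranteeing that I never revisit an edge.

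Combining the two cases, the chain $X = Y_0 \perspup \cdots \perspup Y_m$ ends either with $Y_m$ on the upper-left boundary, all of its edges being \normup, or with $Y_m$ a \steep edge whose predecessor (and all earlier edges) are \normup. In both cases $X$ is up-perspective to the required edge, which proves the lemma. (The steep terminal edge is, as expected, the middle edge of a covering $\SN 7$, arising when $Y_i$ is the lower-right edge of the right-hand cell of such a configuration.)

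The main obstacle is the geometric bookkeeping of the middle paragraph: verifying that the upper-left edge of the cell above a \normup edge can only be \normup or \steep, never \normdn, and establishing the boundary characterization that an empty upper-left side forces membership in the upper-left boundary. This is the one step where planarity, the $4$-cell property, rectangularity, and the Cz\'edli slope conventions must all be invoked together, and it is worth isolating the direction-of-slope analysis so that the termination argument itself stays purely combinatorial.
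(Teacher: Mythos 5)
Your proof is correct and follows essentially the same route as the paper's: starting from $X$, repeatedly pass to the upper-left edge of the $4$-cell having the current edge as its lower-right edge, stopping when that edge is steep or lies on the upper-left boundary. You supply details the paper leaves implicit (termination via the strictly increasing top elements, the slope analysis ruling out a normal-down upper-left edge, and the characterization of when no $4$-cell exists), but the underlying argument is identical.
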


\begin{proof}
If $X$ is not in the upper-left boundary of $K$, 
then there is a $4$-cell $C$ whose lower-right edge is $X$.
If the upper-left edge is \steepn\ or it is in the upper-left boundary, then we are done. 
Otherwise, we proceed the same way until we reach the upper-left boundary.
\end{proof}

\subsection{Trajectories}\label{S:Trajectories}
G. Cz\'edli and E.\,T. Schmidt \cite{CS13} introduced a \emph{trajectory} in $K$ 
as a maximal sequence of consecutive edges, see also [CFL2, Section~4.1]. 
The \emph{top edge}~$T$ of a trajectory 
is either in the upper boundary of $K$ or it is \steepn. 
For such an edge~$T$, we denote by $\traj T$ the trajectory with top edge $T$.
Since an element $a$ in a slim semimodular lattice 
has at most three covers (G.~Gr\"atzer and E. Knapp \cite[Lemma 8]{GKn07}),
a trajectory has at most one top edge and at most one \steep-edge.
So we conclude:

\begin{lemma}\label{L:disj}
Let $K$ be a slim rectangular lattice $K$ with a fixed Cz\'edli diagram.
Let $X$ and $Y$ be \steep-edges of $K$. 
Then $\traj X$ and $\traj Y$ are disjoint.
\end{lemma}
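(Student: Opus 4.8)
The plan is to argue by contradiction, exploiting the structural constraint on trajectories that the paragraph preceding the lemma has just established: \emph{a trajectory has at most one top edge and at most one steep edge}. So suppose that $\traj X$ and $\traj Y$ share a common edge. Since a trajectory is, by definition, a maximal sequence of consecutive edges, any two trajectories that intersect in even a single edge must in fact coincide as sequences. First I would make this coincidence explicit: if $\traj X$ and $\traj Y$ meet, then $\traj X = \traj Y$, because a given edge $E$ determines the trajectory through it uniquely (the trajectory through $E$ is obtained by extending $E$ in both directions through adjacent edges of $4$-cells, and maximality forces the two to be the same sequence).

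Next I would invoke the uniqueness of steep edges along a trajectory. By hypothesis both $X$ and $Y$ are steep edges, and they are edges of $\traj X = \traj Y$, the common trajectory. But the cited consequence of G.~Gr\"atzer and E.~Knapp \cite[Lemma 8]{GKn07} --- that each element of a slim semimodular lattice has at most three covers --- guarantees that a single trajectory contains at most one steep edge. Hence $X = Y$, contradicting the standing assumption (implicit in the statement) that $X$ and $Y$ are distinct steep edges whose trajectories we wish to compare. Therefore $\traj X$ and $\traj Y$ cannot share an edge, i.e.\ they are disjoint.

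The only genuine step requiring care is the first one: justifying that two trajectories sharing an edge must coincide. This is where I would lean on the definition of a trajectory as a \emph{maximal} sequence of consecutive edges, together with the fact that in a $4$-cell lattice the relation of adjacency pairs each interior edge with a well-defined ``opposite'' edge in each incident $4$-cell, so that the process of building a trajectory from any one of its edges is deterministic. I expect this to be the main obstacle only in the sense of bookkeeping --- one must confirm that the extension procedure is unambiguous and that it terminates at the boundary or at a steep edge in a unique way. Once that is in hand, the passage from ``share an edge'' to ``equal'' to ``$X=Y$'' is immediate. No appeal to the Swing Lemma or to the Cz\'edli-diagram slope conventions is needed beyond using them to recognize which edges are steep; the argument is purely about the combinatorics of trajectories and the at-most-three-covers bound.
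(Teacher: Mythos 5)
Your proposal is correct and takes essentially the same route as the paper: the paper deduces the lemma immediately (``So we conclude'') from the preceding observation that the at-most-three-covers bound forces each trajectory to contain at most one steep edge, combined with the standard fact that trajectories partition the prime intervals, which is exactly the ``share an edge implies equal'' step you spell out. Your write-up merely makes explicit what the paper leaves implicit, so there is no substantive difference in approach.
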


%
%

\section{The Partition Property}\label{S:partition}
We start with a lemma.

\begin{lemma}\label{L:disjoint}
Let $X $ and $Y$ be distinct edges on the upper-left boundary of $K$
of color $x$ and $y$, respectively. 
Then there is no edge $Z$ of $K$ of color $z$ such that $z \prec x,y$.
\end{lemma}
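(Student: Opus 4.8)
The plan is to argue by contradiction, converting the hypothetical common lower cover into two swings and then showing these swings must in fact coincide. Since $X$ and $Y$ lie on the upper-left boundary, they lie on the upper edge of $K$, so by Corollary~\ref{C:max} their colors $x$ and $y$ are maximal in $\E P$; note also that upper-left boundary edges are normal-up. Suppose, for contradiction, that some $z\in\E P$ satisfies $z\prec x$ and $z\prec y$. Applying Corollary~\ref{C:ucovv1} to $z\prec x$ with $U=X$, and again to $z\prec y$ with $U=Y$, I obtain edges $L_1,L_2,V_1,V_2$ with $X\perspdn L_1\swing V_1$ and $Y\perspdn L_2\swing V_2$, where $\Col{V_1}=\Col{V_2}=z$. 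By the definition of a swing together with Definition~\ref{D:well}, each $V_i$ is the steep middle edge of a covering $\SN 7$ whose apex is $1_{V_i}=1_{L_i}$.

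The crucial step is to prove $V_1=V_2$. Since $\Col{V_1}=\Col{V_2}=z$, Corollary~\ref{C:ucovv}(i) applies: either $V_1$ and $V_2$ are perspective, or $V_1\perspup S\swing T\perspdn V_2$ for some interior swing $S\swing T$. Here I would exploit that an interior swing is degenerate in a Czédli diagram: its apex covers exactly three elements (slimness bounds the number of covers by three), and being interior forces both $0_S$ and $0_T$ to equal the unique middle one, so $S=T$. Thus in either case $V_1$ and $V_2$ lie in a common trajectory. As both are steep, Lemma~\ref{L:disj} then forces $V_1=V_2$, since distinct steep edges have disjoint trajectories.

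With $V_1=V_2$, both swings take place inside a single covering $\SN 7$ with apex $t=1_{V_1}$. I next pin down $L_1$ and $L_2$. First, $L_i\neq V_i$: otherwise $X\perspdn V_1$ (for $i=1$) would place $X$ and $V_1$ in one trajectory, giving $x=\Col{V_1}=z$ and contradicting $z\prec x$. Since $L_i$ is therefore not the steep middle edge but is an up-going edge issuing from $t$ (it is the lower-right edge of the $4$-cell realizing $X\perspdn L_i$), it must be normal-up, i.e., the rightmost edge $[b,t]$ from $t$; as this rightmost edge is unique, $L_1=L_2$. Finally, in a planar $4$-cell lattice a given edge is the lower-right edge of at most one $4$-cell, and both $X$ and $Y$ are the upper-left edge of the $4$-cell having $L_1=L_2$ as its lower-right edge, so $X=Y$, contradicting $X\neq Y$. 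This contradiction will prove the lemma.

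I expect the rigidity $V_1=V_2$ to be the main obstacle: the entire argument hinges on there being a single steep edge of the non-maximal color $z$, and this is precisely where the degeneracy of interior swings in a Czédli diagram and the one-steep-edge-per-trajectory content of Lemma~\ref{L:disj} do the real work. The remaining bookkeeping—identifying which edge of the $\SN 7$ the perspectivities select and invoking the uniqueness of the $4$-cell sitting above a given edge—should be routine once $V_1=V_2$ is in hand.
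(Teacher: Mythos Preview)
Your overall strategy---produce two swings to steep edges $V_1,V_2$ of color $z$, force $V_1=V_2$ via Corollary~\ref{C:ucovv}(i) and Lemma~\ref{L:disj}, then trace back to $X=Y$---is sound and is more explicit than the paper's argument. The paper proceeds differently at the end: having placed $Z_X$ and $Z_Y$ in a common trajectory, it observes that a proper perspectivity $Z_X\perspup Z_Y$ (or $Z_X\perspdn Z_Y$) would give $0_{Z_X}$ (resp.\ $0_{Z_Y}$) a second upper cover, contradicting that the foot of a steep middle edge is meet-irreducible. So the paper extracts its contradiction from the steep edges themselves, while you push the identification all the way back to the boundary.

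There is, however, a genuine gap in your steps~3 and~4. The relation $X\perspdn L_1$ is an ordinary down-perspectivity of prime intervals: it only says $1_{L_1}\vee 0_X=1_X$ and $1_{L_1}\wedge 0_X=0_{L_1}$. It does \emph{not} say that $X$ and $L_1$ are opposite edges of a single $4$-cell; the interval $[0_{L_1},1_X]$ may be long. Hence your parenthetical ``it is the lower-right edge of the $4$-cell realizing $X\perspdn L_i$'' and the step-4 claim ``both $X$ and $Y$ are the upper-left edge of the $4$-cell having $L_1=L_2$ as its lower-right edge'' are unjustified. Concretely, the unique $4$-cell with lower-right edge $L_1$ need not have $X$ as its upper-left edge at all.

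The repair is easy and stays within the paper's toolkit. Perspective prime intervals in an SPS lattice lie in the same trajectory, so $X\perspdn L_1$ puts $L_1\in\traj X$. Since $X$ is an upper-left boundary edge, $\traj X$ has $X$ as its top edge and consists entirely of \normup edges; in particular $L_1$ is \normupn, hence the unique \normup edge at the apex $t$, and $L_1=L_2$ follows as you intended. For step~4, $X,Y\in\traj{L_1}$ and a trajectory meets the upper-left boundary in at most one edge (its top), so $X=Y$. With these substitutions your argument goes through.
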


\begin{proof}
By way of contradiction, let $Z$ be an edge of color $z$ such that $z \prec x,y$.
$\ell_Y$ the $\normaldown$line through $0_Y$.
By Corollary~\ref{C:ucovv1},  there exist edges $L_X, L_Y, Z_X, Z_Y$ such that 
$X \perspdn L_X \swing Z_X$,  $Y \perspdn L_Y \swing Z_Y$
and $Z \in \traj {Z_X} \ii \traj {Z_Y}$. 
This implies that $Z_X  \perspup Z_Y$, 
contradicting that $0_{Z_X}$ is meet-irreducible 
or that $Z_X  \perspdn Z_Y$, 
contradicting that $0_{Z_Y}$ is meet-irreducible.
\end{proof}

By Corollary~\ref{C:max}, the set of maximal elements of $\E P$ is the same 
as the set of colors of edges in the upper boundaries,
which set we can partition into the set of colors of edges in the upper-left 
and upper-right boundaries. 
No two distinct elements in the same subset have a common lower cover
by Lemma~\ref{L:disjoint}. 
This verifies the Partition Property.

\section{The Maximal Cover Property}\label{S:Maximal}

Let $x \in \E P$ be covered by a maximal element $y$ of $\E P$. 
By Corollary~\ref{C:ucovv1}, there is a covering $\SN 7$ such that
$\Col M = x$ and $\Col L = y$, using the notation of Figure~\ref{F:notation}.
Moreover, $\Col L$ and $\Col R$ are the only covers of $x$ in  $\E P$.
By Corollary~\ref{C:max}, 
there is an edge~$U$ in the upper-left boundary, 
so that $U \perspdn L$. 
Similarly, there is an edge~$V$ in the upper-right boundary, 
so that $V \perspdn R$.
So if the Maximal Cover Property fails, then  $\Col L = \Col R$.
This contradicts Corollary~\ref{C:ucovv}(i).

\section{The No Child Property}\label{S:Child}

By way of contradiction, let us assume that 
there are elements $a, b, c, d \in \E P$ 
with $b \neq c \prec a$, $d \prec b, c$ in~$\E P$,
where $a$ is maximal in $\E P$.
By Corollary~\ref{C:max}, the element~$a$ colors an edge~$A$ in the upper boundary of $K$,
say, in the upper-left boundary.

By Corollary~\ref{C:ucovv1}, we get a covering $\SN 7$,   
with middle edge~$B$, upper-left edge $L$ satisfying that $\Col B = b$ and $A \perspdn L$.
Similarly, we get another covering~$\SN 7$, with middle edge $C$ 
and upper-left edge $L'$ satisfying that $\Col B = b$ and $A \perspdn L'$. 
By Lemma~\ref{L:disj}, $\traj B$ and $\traj C$ are disjoint, 
contradicting the existence of the element $d \prec b,c \in \E P$,
see Corollary~\ref{C:ucovv}(ii).

\section{The Four-Crown Two-pendant Property}\label{S:Crown}

By way of contradiction, assume that the ordered set $\E R$ of Figure~\ref{F:notation}
is a cover preserving ordered subset of $\E P$,
where $a,b,c,d$ are maximal elements of $\E P$. 
By Corollary~\ref{C:max}, there are edges $A,B,C,D$ on the upper boundary of $K$, 
so that  $\col A = a$, $\col B = b$, $\col C=c$, $\col D = d$.
By left-right symmetry, we can assume that the edge $A$ 
is on the upper-left boundary of $K$. Since $p \prec a, b$ in  $\E P$,
it follows from Lemma~\ref{L:disjoint} that the edge $B$
cannot be on the upper-left boundary of $K$. 
So $B$ is on the upper-right boundary of $K$, and so is $D$.
Similarly, $C$ is on the upper-left boundary of $K$. 
So there are four cases, (i) $C$ is below $A$ and $B$ is below $D$;
(ii) $C$ is below $A$ and $D$ is below $B$; and so on. 
The first two are illustrated in Figure~\ref{F:CABDx}.

\begin{figure}[htb]
\centerline{\includegraphics[scale=1.2]{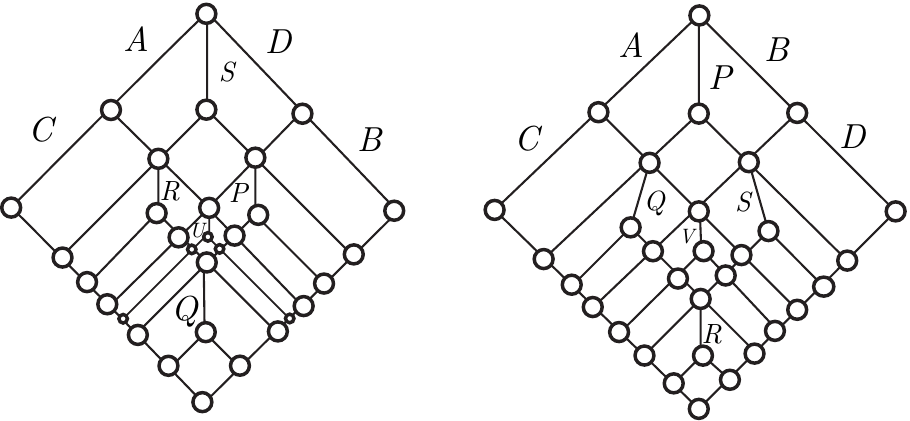}}
\caption{Illustrating the proof of The Four-Crown Two-pendant Property}
\label{F:CABDx}
\end{figure}

We consider the first case.
By Corollary~\ref{C:ucovv1} (see Figure~\ref{F:Corollary2}), 
there is a covering~$\SN 7$ with middle edge $P$ 
(as in the first diagram of Figure~\ref{F:CABDx})
so that $A$ and $B$ are down-perspective 
to the left-upper edge and the right-upper edge of the covering~$\SN 7$,
respectively. We define, similarly, the edge $Q$ for $C$ and $B$,
the edge $S$ for $A$ and~$D$, the edge $R$ for $C$ and $D$,
and the edge $U$ for $R$ and $P$.

The ordered set $\E R$ is a cover preserving subset of $\E P$,
so we get the covering~$\SN 7$ with middle edge $U$, 
upper-left edge $U_l$ and upper-right edge $U_r$;
the edge $U$ is collapsed by $\con P \mm \con R$.
Then $R \perspdn U_l$ and  $P \perspdn U_r$,
contradicting that  $\traj P$ and $\traj R$ do not meet in a $4$-cell
since $\ZL P$ and $\ZL R$ are both \normalupn\ and so parallel.
This conludes the proof of the Four-Crown Two-pendant Property
and of Cz\'edli's Theorem.

Of course, the diagrams in Figure~\ref{F:CABDx} are only illustrations.
The grid could be much larger, the edges $A, C$ and $B, D$ may not be adjacent, 
and there maybe lots of other elements in $K$. 
However, our argument only utilized what is true 
(for instance, that some edges are \normalupn) 
whatever the configuration.

The second case is similar, except that we get the edge $V$ and cannot get the edge $U$.
The third and fourth cases follow the same way.


\begin{thebibliography}{10}

\bibitem{gC1} 
    Cz\'edli, G.: 
    Diagrams and rectangular extensions of planar semimodular lattices.
    Algebra Universalis \tbf{77}, 443--498  (2017)

\bibitem{gCa} 
  Cz\'edli, G.: 
  Lamps in slim rectangular planar semimodular lattices.\\
  \verb'http://arxiv.org/abs/2101.0292'

\bibitem{CS13} 
   Cz\'edli, G., Schmidt, E.\,T.:
   Slim semimodular lattices. I. A visual approach. 
   ORDER \tbf{29}, 481-497 (2012)

\bibitem{gG14a}
G. Gr\"atzer, 
Congruences of fork extensions of lattices.
Algebra Universalis \tbf{76} (2016), 139-154.

\bibitem{gG15} 
 Gr\"atzer, G.: 
 Congruences in slim, planar, semimodular lattices: The Swing Lemma. 
 Acta Sci. Math. (Szeged) \tbf{81}, 381--397 (2015)

\bibitem{CFL2}
Gr\"atzer, G.:
The Congruences of a Finite Lattice, A {Proof-by-Picture} Approach,
second edition.
Birkh\"auser, 2016. xxxii+347. 

\bibitem{gG14e}
G. Gr\"atzer,
Congruences and trajectories in planar semimodular lattices. 
Discussiones Mathematicae \tbf{38} (2018), 131--142.

\bibitem{gG21b}
G. Gr\"atzer, 
Notes on planar semimodular lattices. IX. On Cz\'edli diagrams.  
http://arxiv.org/abs/2104.02534

\bibitem{GKn07}
G. Gr\"atzer and E. Knapp, 
{Notes on planar semimodular lattices. I. Construction.}
Acta Sci. Math. (Szeged) \tbf{73} (2007), 445--462. 

\bibitem{gG21b}
G. Gr\"atzer, 
Notes on planar semimodular lattices. IX. On Cz\'edli diagrams.\\ 
http://arxiv.org/abs/2104.02534

\bibitem{GKn09}
G. Gr\"atzer and E. Knapp, 
{Notes on planar semimodular lattices. III. Rectangular lattices.}
Acta Sci. Math. (Szeged) \tbf{75} (2009), 29--48.

\end{thebibliography}
\end{document}